\theoremstyle{plain}
\newtheorem{thm}{Theorem}
\newtheorem{lemma}[thm]{Lemma}
\theoremstyle{definition}
\newtheorem*{defn*}{Definition}
\newtheorem*{question*}{Question}
\newtheorem{example}[thm]{Example}
\newtheorem*{example*}{Example}
\newtheorem*{rem*}{Remark}
\newcommand{\field}[1]{\mathbb{#1}}
\newcommand{\N}{\field{N}}
\newcommand{\ideal}[1]{\mathfrak{#1}}
\newcommand{\m}{\ideal{m}}
\newcommand{\p}{\ideal{p}}
\newcommand{\func}[1]{\mathrm{#1} \,}
\newcommand{\Spec}{\func{Spec}}
\newcommand{\ra}{\rightarrow}
\DeclareMathOperator{\ann}{ann}
\newcommand{\be}{\begin{enumerate}}
\newcommand{\ee}{\end{enumerate}}
\newcommand{\li}
 {\leftfootline}
\newcommand{\cI}{\mathcal{I}}
\renewcommand{\phi}{\varphi}
\let\int\relax
\DeclareMathOperator{\int}{i}
\author{Neil Epstein}
\address{Department of Mathematical Sciences \\ George Mason University \\ Fairfax, VA  22030}
\email{nepstei2@gmu.edu}
\title{The McCoy property in Ohm-Rush algebras}
\date{October 8, 2020}
\begin{document}
\maketitle

\begin{abstract}
  An Ohm-Rush algebra $R \ra S$ is called \emph{McCoy} if for any zero-divisor $f$ in $S$, its content $c(f)$ has nonzero annihilator in $R$, because McCoy proved this when $S=R[x]$.  We answer a question of Nasehpour by giving an example of a   faithfully flat Ohm-Rush algebra with the McCoy property that is not a weak content algebra.  However, we show that a faithfully flat Ohm-Rush algebra is a weak content algebra iff $R/I \ra S/I S$ is McCoy for all  radical (resp. prime) ideals $I$ of $R$.  When $R$ is Noetherian (or has the more general \emph{fidel (A)} property), we show that it is equivalent that $R/I \ra S/IS$ is McCoy for all ideals.  
\end{abstract}

Nearly 70 years ago, McCoy \cite{Mcc-divzero} showed that if a polynomial with coefficients in a given commutative ring is a zero-divisor, then there is some nonzero element of the base ring that annihilates all of the coefficients.  Fields \cite[Theorem 5]{Fi-zdnilps} proved the same thing when $R$ is Noetherian and $S=R[\![x]\!]$.  Nashepour \cite[Theorem 2]{Nas-zdsgmod} proved the same thing when $S=R[M]$, provided that the commutative monoid $M$ is torsion-free and cancellative.  Interestingly, in this case he shows that these properties for $M$ are also equivalent to $R \ra S$ being a content algebra.  Extending this result \cite[Theorem 3]{Nas-zdcontent}, he shows that it is also equivalent that $R\ra S$ is a weak content algebra, or even the property that for any pair of ``polynomials’’ with unit content, their product also has unit content.

Let $R \ra S$ be a ring homomorphism.  Recall that for $f\in S$, the \emph{content} of $f$, denoted $c(f)$, is the intersection of all those ideals $I$ of $R$ for which $f\in IS$.  If $c(f)$ is itself such an ideal (i.e. there is a minimum such ideal), we call the algebra \emph{Ohm-Rush}.  We call an Ohm-Rush algebra a \emph{content algebra} if it is faithfully flat and satisfies the Dedekind-Mertens property, namely that for any $f,g\in S$, there is some $n\in \N$ with $c(f)^{n+1} c(g) = c(f)^n c(fg)$.  See \cite{OhmRu-content}.  For instance, $R \ra R[x]$ is a content algebra \cite{Ded-DM, Mer-DM}, and if $R$ is Noetherian, then $R \ra R[\![x]\!]$ is a content algebra \cite{nmeSh-DMpower}.  An intermediate property is that of a \emph{weak content algebra} \cite{Ru-content}, which is an Ohm-Rush algebra such that $c(fg)$ and $c(f)c(g)$ always have the same radical.  It is equivalent to posit that for any prime ideal $\p \in \Spec R$, either $\p S = S$ or $\p S$ is prime.  Between weak content algebra and content algebra lies the notion of a \emph{semicontent algebra} \cite{nmeSh-OR}, which is a faithfully flat Ohm-Rush algebra such that for any multiplicative subset $W \subseteq R$ and any $f,g\in S$ with $c(f) \cap W \neq \emptyset$, we have $c(fg)_W = c(g)_W$.  To date nobody knows whether there is a semicontent algebra that fails to be a content algebra, nor whether there is a faithfully flat weak content algebra that fails to be a semicontent algebra.  However, the author and Shapiro showed \cite{nmeSh-OR2} that if $R$ is Noetherian, every faithfully flat weak content algebra over $R$ must be a semicontent algebra.
    
Recall that an Ohm-Rush $R$-algebra $R \ra S$ is called \emph{McCoy} \cite[Definition 2.9]{Nas-Aus} if whenever $f,g\in S$ with $fg=0$ and $g\neq 0$, there is some nonzero $r\in R$ with $rc(f)=0$. In that article, Nasehpour notes (citing in turn \cite[6.1]{OhmRu-content}) that in this terminology, content algebras must be McCoy, and asks \cite[Question 2.12]{Nas-Aus} whether there is any faithfully flat McCoy algebra that is not a content algebra.

After showing that the question itself has a negative answer (i.e. there \emph{is} a faithfully flat McCoy algebra that isn’t a content algebra), we give a partial positive answer to this question by strengthening the premise and weakening the conclusion.  We conclude our note by showing that for faithfully flat Ohm-Rush algebras over a Noetherian base, the corresponding property is indeed equivalent to the weak content and semicontent algebra conditions.

\begin{example}
Let $k$ be a field and let $x,y$ be indeterminates over $k$.  Let $R:=k[x]$ and $S := k[x,y]/ (y^2 - x^3) = R[y] / (y^2 - x^3)$.  Note that $S$ is free as an $R$-module on the basis $\{1,y\}$.  Hence the algebra $R \ra S$ is Ohm-Rush and faithfully flat.  Moreover, since $S$ is an integral domain, it has no nonzero zero-divisors, whence the McCoy property is vacuously satisfied.  But $R \ra S$ is not a content algebra; indeed, it is not even a \emph{weak} content algebra.  To see this, simply note that 
$c(y)^2 = R$, but $c(y^2) = x^3R$, whose radical is the proper ideal $xR$.
\end{example}

However, Nasehpour did have a reason to think the McCoy property was intimately linked to the content algebra property.  As noted above, he proved that for certain monoid algebras $R[M]$ over $R$, these two properties (along with the weak content algebra property) are equivalent.  However, as we saw, this equivalence follows from a properties of the monoid $M$ (namely that it is cancellative and torsion-free), and hence passes to the base-changed residue algebra $R/I \ra R[M] / I R[M] = (R/I)[M]$. Hence, we consider the following

\begin{defn*}
Let $R \ra S$ be an Ohm-Rush algebra, and let $\cI$ be a set of ideals of $R$.  We say the algebra $R \ra S$ is \emph{residually McCoy for $\cI$} if for any ideal $I \in \cI$, the algebra $R/I \ra S/IS$ is McCoy. We say the algebra is \emph{residually McCoy} if it is residually McCoy for the set of all ideals.
\end{defn*}

Recall \cite[Proposition 3.6]{nmeSh-OR2} that if $R \ra S$ is a content algebra (resp. a weak content algebra), so is $R/I \ra S/IS$ for any ideal $I$ of $R$.  Hence content algebras are not merely McCoy, but residually so.  As a partial converse, we have the following.

\begin{thm}\label{thm:wcarmc}
Let $R \ra S$ be an Ohm-Rush algebra.  Then the following are equivalent: \begin{enumerate}
    \item\label{it:wca} $R \ra S$ is a weak content algebra.
    \item\label{it:Mcprime} $R \ra S$ is residually McCoy for prime ideals.
    \item\label{it:Mcrad} $R \ra S$ is residually McCoy for radical ideals.
\end{enumerate}
Hence, a residually McCoy algebra is always a weak content algebra.
\end{thm}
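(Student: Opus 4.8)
The plan is to establish the cycle \ref{it:wca} $\Rightarrow$ \ref{it:Mcrad} $\Rightarrow$ \ref{it:Mcprime} $\Rightarrow$ \ref{it:wca}. The middle implication is immediate, since prime ideals are radical, and the concluding sentence of the theorem then follows at once: a residually McCoy algebra is in particular residually McCoy for prime ideals, hence a weak content algebra. Throughout I will freely use the recalled characterization that $R \ra S$ is a weak content algebra iff for every $\p \in \Spec R$ either $\p S = S$ or $\p S$ is prime, the fact that for an ideal $I$ the quotient $R/I \ra S/IS$ is again Ohm-Rush with $c_{R/I}(\bar f) = (c(f) + I)/I$ (so that the McCoy property is meaningful for it; this is a routine unwinding of the definitions, since $f \in JS$ iff $c(f) \subseteq J$ for $J \supseteq I$), and the trivial observation that $c(h) = 0$ iff $h = 0$ in any Ohm-Rush algebra.

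For \ref{it:wca} $\Rightarrow$ \ref{it:Mcrad}, the crux is the elementary remark that a weak content algebra whose base ring is \emph{reduced} is automatically McCoy: if $fg = 0$ with $g \neq 0$, then $c(f)c(g) \subseteq \sqrt{c(f)c(g)} = \sqrt{c(fg)} = \sqrt{0} = 0$, while $g \neq 0$ forces $c(g) \neq 0$, so any nonzero element of $c(g)$ annihilates $c(f)$. Now given a radical ideal $I$, the algebra $R/I \ra S/IS$ is a weak content algebra by \cite[Proposition 3.6]{nmeSh-OR2} and $R/I$ is reduced, hence it is McCoy; thus $R \ra S$ is residually McCoy for radical ideals.

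For \ref{it:Mcprime} $\Rightarrow$ \ref{it:wca}, fix $\p \in \Spec R$ with $\p S \neq S$; by the prime characterization it suffices to prove that $S/\p S$ is a domain. By hypothesis $R/\p \ra S/\p S$ is McCoy, $R/\p$ is a domain, and $S/\p S \neq 0$. If $\bar f \bar g = 0$ in $S/\p S$ with $\bar g \neq 0$, the McCoy property yields a nonzero $\bar r \in R/\p$ with $\bar r\, c_{R/\p}(\bar f) = 0$; since $R/\p$ is a domain this forces $c_{R/\p}(\bar f) = 0$, hence $\bar f = 0$. Therefore $S/\p S$ has no zerodivisors, so $\p S$ is prime and $R \ra S$ is a weak content algebra.

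I do not anticipate a genuine mathematical obstacle: the entire argument turns on the observation that over a reduced (respectively, domain) base ring the McCoy condition degenerates to ``$S$ has no zerodivisors,'' which is exactly what the local-prime description of weak content algebras demands. The only points that need a careful word are the bookkeeping verifications that $R/I \ra S/IS$ really is Ohm-Rush with the expected content function, and that the degenerate cases ($\p S = S$, or a residue ring being zero) are covered by the standard conventions that a domain is nonzero and that the McCoy property is vacuous when the target ring vanishes.
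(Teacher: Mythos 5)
Your proof is correct and follows essentially the same circular chain of implications as the paper: (3) $\Rightarrow$ (2) is trivial, (2) $\Rightarrow$ (1) is the same content computation in the quotient (you phrase it as showing $S/\p S$ is a domain rather than lifting back to elements of $S$ with $fg \in \p S$, an equivalent bookkeeping choice), and (1) $\Rightarrow$ (3) rests on the same base-change result for weak content algebras. The only real difference is that where the paper cites Nasehpour's Proposition 2.8 for ``weak content over a reduced base implies McCoy,'' you inline its short proof via $c(f)c(g) \subseteq \sqrt{c(f)c(g)} = \sqrt{c(fg)} = \sqrt{0} = 0$, which is correct and makes the argument self-contained.
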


\begin{proof}
We give a circular proof.  The implication (\ref{it:Mcrad}) $\implies$ (\ref{it:Mcprime}) is trivial.

For the implication (\ref{it:Mcprime}) $\implies$ (\ref{it:wca}), let $\p \in \Spec R$.  Let $f, g \in S$ with $fg \in \p S$ and $g \notin \p S$.  Then since $R/\p \ra S/\p S$ is McCoy, there is some $r\in R \setminus \p$ with $\bar{r}\bar{c}(\bar{f}) = \bar{0}$, where $\bar{r} := r+\p \in R/\p$, $\bar{c} := c_{S/\p S, R/\p}$, and $\bar{f} := f+ \p S \in S/\p S$.  By \cite[Remark 2.3(d)]{OhmRu-content}, we have $\bar{c}(\bar{f}) = c_{SR}(f) + \p / \p \subseteq R/\p$.  It follows that $rc(f) \subseteq \p$.  Since $r\notin \p$ and $\p$ is prime, $c(f) \subseteq \p$.  But then by the Ohm-Rush property, $f \in \p S$.  Thus, either $\p S = S$ or $\p S \in \Spec S$, whence $R \ra S$ is a weak content algebra.

For the implication (\ref{it:wca}) $\implies$ (\ref{it:Mcrad}), let $I$ be a radical ideal.  Then by \cite[Proposition 3.6(1)]{nmeSh-OR2}, $R/I \ra S/IS$ is a weak content algebra, whence by \cite[Proposition 2.8]{Nas-zdsemi}, $S/IS$ is a McCoy algebra over $R/I$.
\end{proof}

Hence, any counterexample (a faithfully flat residually McCoy algebra that was not a content algebra) would yield a counterexample to the longstanding open question of whether a faithfully flat weak content algebra must be a content algebra.

To summarize, we have the following diagram of algebra types:
\[\xymatrix{
& \text{semicontent} \ar@{=>}[dr] & \text{resid.\ McCoy for prime ideals} \ar@{<=>}[d] \\
\text{content} \ar@{=>}[ur] \ar@{=>}[dr] &
& \text{weak content} \ar@{<=>}[d] \\
& \text{residually McCoy} \ar@{=>}[ur] & \text{resid.\ McCoy for radical\ ideals}
}\]

Next, we tackle the Noetherian case, or rather the case of a Noetherian base ring.  Recall \cite[Corollary 4.3]{nmeSh-OR2} that a faithfully flat weak content algebra over a Noetherian ring is the same as a semicontent algebra.  So it makes sense to ask whether faithfully flat residually McCoy algebras coincide with the above algebra types.  It turns out that they do.  However, we are able to expand our context a bit beyond Noetherian rings for this.

\begin{defn*}
Recall that a ring $R$ has \emph{property (A)} (also called a \emph{McCoy ring} in some sources) if whenever $J$ is a finitely generated ideal consisting of zero-divisors, there is a nonzero $r\in R$ with $J \subseteq \ann(r)$.  We say that $R$ has the \emph{fidel (A)} property if for any ideal $I$, $R/I$ has property (A).
\end{defn*}
Familiar classes of examples of fidel (A)-rings include: Noetherian rings, B\'ezout rings, zero-dimensional rings, and one-dimensional domains.  See  \cite[Section 4]{AnCh-annmod} for a development of this concept and these facts.  
We note additionally that if $W \subset R$ is a multiplicative set without zero-divisors and $R$ has property (A), then $W^{-1}R$ has property (A) as well, since $r/1\neq 0$ in $W^{-1}R$.

We start with some lemmas for reducing the problem to a more amenable base ring.

\begin{lemma}\label{lem:Mclocalnzd}
Let $R \ra S$ be a flat Ohm-Rush algebra.  Let $W \subseteq R$ be a multiplicative set consisting of regular elements.  Then $R \ra S$ is McCoy if and only if $W^{-1}R \ra W^{-1}S$ is McCoy.
\end{lemma}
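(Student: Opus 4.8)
The plan is to push zerodivisor equations and nonzero annihilators back and forth across the localization map $R \ra W^{-1}R$, leaning on two structural facts. First, since $R \ra S$ is flat, multiplication by any regular element $u \in R$ is injective on $S$, so every element of $W$ is a non-zerodivisor on $S$; this is exactly what lets the hypotheses ``$g \neq 0$'' and ``$fg = 0$'' survive the clearing of denominators, in either direction. Second, $W^{-1}R \ra W^{-1}S$ is again Ohm-Rush, with content given by extension: $c_{W^{-1}S}(f/w) = c_{SR}(f)\,W^{-1}R$ for all $f \in S$, $w \in W$. I would either cite this (it is the standard behavior of Ohm-Rush algebras under flat, in particular localizing, base change) or reprove it quickly: ``$\subseteq$'' is immediate from $f \in c_{SR}(f)S$, and for ``$\supseteq$'' one writes a competing ideal of $W^{-1}R$ as $W^{-1}I$ for its contraction $I$, notes $uf \in IS$ for some $u \in W$, and applies flatness to the injective multiplication-by-$u$ map $R/(I:_R u) \ra R/I$ to conclude $f \in (I :_R u)S$, hence $c_{SR}(f) \subseteq (I:_R u)$ and, $u$ being a unit downstairs, $c_{SR}(f)W^{-1}R \subseteq W^{-1}I$.

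Granting these, the forward direction runs as follows: given $\phi\gamma = 0$ and $\gamma \neq 0$ in $W^{-1}S$, write $\phi = f/w$, $\gamma = g/v$; clearing denominators and using that some $u \in W$ is regular on $S$ gives $fg = 0$ in $S$ with $g \neq 0$; McCoy for $R \ra S$ produces $0 \neq r \in R$ with $r\,c_{SR}(f) = 0$; then $r/1 \neq 0$ in $W^{-1}R$ (regularity of $W$ forces $r/1 = 0 \Rightarrow r = 0$), and by the content formula $r/1$ annihilates $c_{W^{-1}S}(\phi) = c_{SR}(f)W^{-1}R$. The reverse direction is symmetric: given $fg = 0$ and $g \neq 0$ in $S$, regularity of $W$ on $S$ forces $g/1 \neq 0$ in $W^{-1}S$, so McCoy for $W^{-1}R \ra W^{-1}S$ gives $0 \neq \rho = r/w$ with $\rho\,c_{W^{-1}S}(f/1) = 0$; then $r \neq 0$, and for each $a \in c_{SR}(f)$ the relation $ra/1 = 0$ in $W^{-1}R$ yields $u_a\, r a = 0$ in $R$ for some $u_a \in W$, whence $ra = 0$ by regularity of $u_a$, so $r\,c_{SR}(f) = 0$.

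I expect no genuinely deep obstacle here; the care points are (i) invoking flatness of $R \ra S$ precisely where a regular element of $R$ must be promoted to a non-zerodivisor on $S$, and (ii) in the reverse direction, noticing that the denominators $u_a$ witnessing $ra/1 = 0$ a priori depend on $a$ and that $c_{SR}(f)$ need not be finitely generated — but regularity of each $u_a$ removes this dependence uniformly, so no finiteness is needed. The Ohm-Rush hypothesis is used only so that $c_{W^{-1}S}$ is defined at all, and flatness of $R \ra S$ is the single input making both structural facts go through.
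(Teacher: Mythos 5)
Your proof is correct and follows essentially the same route as the paper's: flatness makes every element of $W$ a non-zerodivisor on $S$, and the content function commutes with localization (the paper cites Ohm--Rush for this where you reprove it). The only cosmetic difference is in the forward direction, where you cancel the regular element $u$ directly from $ufg=0$ to get $fg=0$, while the paper instead applies the McCoy hypothesis to the pair $(xf,g)$ and then uses $c(xf)=xc(f)$; both work.
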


\begin{proof}
First suppose $R \ra S$ is McCoy, and let $W$ be a multiplicative set in $R$ containing no zero-divisors.  Let $f/v$, $g/w \in W^{-1}S$ such that $(f/v)(g/w) = 0$, but $g/w\neq 0$.  Then there is some $x\in W$ with $xfg=0$, and we also have $g \neq 0$.  By the McCoy property for $R \ra S$, there is some nonzero $r\in R$ with $rc(xf)=0$.  But by flatness of $S$ over $R$ and \cite[Theorem 1.6]{OhmRu-content}, we then have $rxc(f)=0$, whence by \cite[Theorem 3.1]{OhmRu-content}, we have $(r/1)c_{W^{-1}S, W^{-1}R}(f/v) = 0$.  Moreover, $r/1 \neq 0$ in $W^{-1}R$ since $W$ lacks zero-divisors, finishing the proof that $W^{-1}R \ra W^{-1}S$ is McCoy.

Conversely, suppose $W^{-1}R \ra W^{-1}S$ is McCoy.  Let $f,g \in S$ with $fg=0$ and $g\neq 0$. Then since $R \ra S$ is flat, no element of $W$ can be a zero-divisor in $S$, which means that $g/1 \neq 0$ in $W^{-1}S$.  It follows from the McCoy property in $W^{-1}R \ra W^{-1}S$ that there is some nonzero $r/w \in W^{-1}R$ with $(r/w)c_{W^{-1}S, W^{-1}R}(f/1) =0$.  But then using \cite[Theorem 3.1]{OhmRu-content} again, we have $W^{-1}(rc(f)) = 0$, and since $W$ has no zero-divisors in $R$, we conclude that $rc(f)=0$.
\end{proof}

Next, we show that among flat Ohm-Rush algebras, the property of being McCoy globalizes.

\begin{lemma}\label{lem:Mcglobal}
Let $R \ra S$ be a flat Ohm-Rush algebra such that $R_\m \ra S_\m$ is a McCoy algebra for all maximal ideals $\m$ of $R$, where `$S_\m$' means the localization of $S$ at the multiplicative set $R \setminus \m$.  Then $R \ra S$ is McCoy.
\end{lemma}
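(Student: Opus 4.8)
The plan is to deduce the global McCoy property by localizing at one carefully chosen maximal ideal. Two standing facts make this painless. First, in any Ohm-Rush algebra every content ideal is finitely generated: since $f \in c(f)S$ we may write $f = \sum_{i=1}^n a_i s_i$ with $a_i \in c(f)$ and $s_i \in S$, whence $f \in (a_1, \dots, a_n)S$, and minimality of $c(f)$ forces $c(f) = (a_1, \dots, a_n)$. Second, by \cite[Theorem 3.1]{OhmRu-content} each localization $R_\m \to S_\m$ of our flat Ohm-Rush algebra is again flat and Ohm-Rush (so the hypothesis is even meaningful), and the content commutes with localization: $c_{S_\m, R_\m}(f/1) = c(f)R_\m$.

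Now suppose $f, g \in S$ with $fg = 0$ and $g \neq 0$; we must produce a nonzero $r \in R$ with $rc(f) = 0$. The ideal $\ann_R(g)$ --- the annihilator of $g$ viewed as an element of the $R$-module $S$ --- is proper since $g \neq 0$, so it lies in some maximal ideal $\m$ of $R$. I claim $g/1 \neq 0$ in $S_\m$: otherwise some $w \in R \setminus \m$ would kill $g$, forcing $w \in \ann_R(g) \subseteq \m$, absurd. Since also $(f/1)(g/1) = (fg)/1 = 0$ in $S_\m$, the McCoy property of $R_\m \to S_\m$ yields a nonzero $\rho \in R_\m$ with $\rho\, c_{S_\m, R_\m}(f/1) = 0$, that is, $\rho\, c(f)R_\m = 0$; hence $\ann_{R_\m}(c(f)R_\m) \neq 0$.

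It remains to descend this to $R$. Since $c(f)$ is finitely generated, formation of its annihilator commutes with the localization map $R \to R_\m$, so $(\ann_R c(f))_\m = \ann_{R_\m}(c(f)R_\m) \neq 0$; a module with a nonzero localization is nonzero, so $\ann_R(c(f)) \neq 0$, and any nonzero element of it is the desired $r$. (If one prefers not to invoke that annihilators of finitely generated modules localize: write $\rho = r/u$ and $c(f) = (a_1, \dots, a_n)$; then each $(r a_i)/1 = 0$ in $R_\m$, so some $v_i \in R \setminus \m$ satisfies $v_i r a_i = 0$, and $r' := v_1 \cdots v_n\, r$ annihilates $c(f)$ while remaining nonzero, as one checks by passing back to $R_\m$.)

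I do not anticipate a genuine obstacle; the single point needing care --- and the reason one cannot simply localize at an arbitrary maximal ideal --- is ensuring that $g$ stays nonzero after localization, which is exactly why $\m$ is chosen to contain $\ann_R(g)$, so that the local McCoy hypothesis can be applied to the pair $(f/1, g/1)$.
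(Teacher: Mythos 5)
Your proof is correct and follows essentially the same route as the paper's: localize at a well-chosen maximal ideal, apply the local McCoy hypothesis there, and descend to $R$ using the finite generation of $c(f)$. The only (immaterial) difference is in the choice of $\m$: you take $\m \supseteq \ann_R(g)$ to guarantee $g/1 \neq 0$ in $S_\m$ directly, whereas the paper takes $\m$ with $c(g)_\m \neq 0$ and invokes the Ohm-Rush property together with the compatibility of content with localization to reach the same conclusion.
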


\begin{proof}
Let $f,g\in S$ with $fg=0$ and $g\neq 0$.  Then since $R \ra S$ is Ohm-Rush, it follows that $c(g)\neq 0$, whence there is some maximal ideal $\m$ of $R$ with $c(g)_\m \neq 0$. Then by \cite[Theorem 3.1]{OhmRu-content}, we have $c_{R_\m, S_\m}(g/1) = c(g)_\m \neq 0$, whence $g/1 \neq 0$ in $S_\m$.  Since $R_\m \ra S_\m$ is McCoy, it follows that there is some nonzero $r/w \in R_\m$ with $(r c(f))_\m = (r/w) c_{R_\m, S_\m}(f/1) = 0$.  Since $rc(f)$ is a finitely generated ideal \cite[p.\ 51]{OhmRu-content}, there is then some $v \in R \setminus \m$ with $vrc(f)=0$.  But $r/w \neq 0$ in $R_\m$ implies that $vr \neq 0$, completing the proof that $R \ra S$ is McCoy.
\end{proof}

Now we are ready for the result in the locally fidel (A) case. 

\begin{thm}\label{thm:McCoySCAfidel}
Let $R \ra S$ be a semicontent algebra, and assume that for any prime ideal $\p$ of $R$, $R_\p$ is a fidel (A)-ring (e.g. if $R$ is a locally Noetherian, arithmetical, or zero-dimensional ring, or a 1-dimensional domain).  Then $R \ra S$ is residually McCoy.
\end{thm}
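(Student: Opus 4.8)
The plan is to use the globalization Lemma~\ref{lem:Mcglobal} to reduce to a base ring that is \emph{local} with property (A), and then to exploit the semicontent hypothesis (rather than merely weak content) to force the content of the ``numerator'' to consist of zero-divisors, at which point property (A) produces exactly the annihilator that the McCoy condition requires.

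Concretely, fix an ideal $I$ of $R$; the goal is that $R/I \ra S/IS$ is McCoy. This algebra is flat (as $S$ is flat over $R$) and Ohm-Rush (by \cite[Remark 2.3(d)]{OhmRu-content}), so Lemma~\ref{lem:Mcglobal} lets us work one maximal ideal at a time: it suffices that $(R/I)_{\bar\m} \ra (S/IS)_{\bar\m}$ is McCoy for each maximal ideal $\bar\m = \m/I$ of $R/I$, where $\m$ runs over the maximal ideals of $R$ containing $I$. Here $(R/I)_{\bar\m} = R_\m/IR_\m =: A$ and $(S/IS)_{\bar\m} = S_\m/IS_\m =: B$; since $R_\m$ is a fidel (A)-ring by hypothesis, its quotient $A$ has property (A), and $R_\m \ra S_\m$ remains a semicontent algebra (localization commutes with content by \cite[Theorem 3.1]{OhmRu-content} and preserves faithful flatness, so the semicontent condition is inherited). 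So everything comes down to showing that $A \ra B$ is McCoy.

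To that end, let $f,g \in B$ with $fg=0$ and $g \neq 0$, and lift them to $\tilde f,\tilde g \in S_\m$; since the kernel of $S_\m \ra B$ is $IS_\m$ we get $\tilde f\tilde g \in IS_\m$ and hence $c_{S_\m}(\tilde f\tilde g) \subseteq IR_\m$. The crucial claim is that $c_B(f)$ consists of zero-divisors of $A$. If not, choose $a \in c_B(f)$ that is a non-zerodivisor in $A$; using $c_B(f) = (c_{S_\m}(\tilde f)+IR_\m)/IR_\m$ (\cite[Remark 2.3(d)]{OhmRu-content}) pick a lift $a' \in c_{S_\m}(\tilde f)$ of $a$, and feed the multiplicative set $W = \{(a')^n : n \ge 0\}$ (which meets $c_{S_\m}(\tilde f)$) into the semicontent property of $R_\m \ra S_\m$: this yields $c_{S_\m}(\tilde g)_W = c_{S_\m}(\tilde f\tilde g)_W \subseteq (IR_\m)_W$. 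Since $c_{S_\m}(\tilde g)$ is finitely generated (\cite[p.~51]{OhmRu-content}), some $(a')^N$ carries it into $IR_\m$, so $a^N c_B(g) = 0$ in $A$; as $a$ is a non-zerodivisor this forces $c_B(g)=0$, whence $g=0$ by the Ohm-Rush property, a contradiction. Thus $c_B(f)$ is a finitely generated ideal of $A$ contained in the zero-divisors, and property (A) of $A$ supplies a nonzero $r$ with $rc_B(f)=0$; this is the McCoy property for $A \ra B$, and by the reduction above $R/I \ra S/IS$ is McCoy. As $I$ was arbitrary, $R \ra S$ is residually McCoy.

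The main obstacle is exactly what the radical-ideal restriction in Theorem~\ref{thm:wcarmc} sidesteps: working modulo a non-radical ideal leaves nilpotents in $B$, so $g$ may be a nonzero nilpotent, and the weak content identity $\sqrt{c(f)c(g)} = \sqrt{c(fg)} = \sqrt{0}$ gives only that $c(f)c(g)$ is nilpotent --- not enough to conclude anything about $c(f)$ alone. The semicontent hypothesis, used with the tiny multiplicative set generated by a single candidate non-zerodivisor of $c(f)$, is precisely what converts this into the clean dichotomy ``$c(f)$ consists of zero-divisors, or $g=0$'', and this is the step that genuinely needs ``semicontent'' rather than ``weak content''. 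A minor point to verify carefully is that the semicontent property survives localization at $R \setminus \m$; one can instead avoid relying on that by lifting $\tilde f, \tilde g$ all the way to $S$ and applying the semicontent property of $R \ra S$ directly, at the cost of a slightly larger multiplicative set.
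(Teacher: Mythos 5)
Your proof is correct, but it follows a genuinely different route from the paper's. The paper first reduces to the case $I=0$ (quoting that quotients of semicontent algebras are semicontent), then performs \emph{two} localizations: first at the set $W$ of all regular elements of $R$ via Lemma~\ref{lem:Mclocalnzd}, then at maximal ideals via Lemma~\ref{lem:Mcglobal}. The point of the first localization is to arrange that the local ring $(R,\m)$ has $\m$ consisting entirely of zero-divisors; the argument then splits into the dichotomy ``$c(f)=R$'' (ruled out because the semicontent property with the trivial multiplicative set gives $c(g)=c(fg)=0$, hence $g=0$) versus ``$c(f)\subseteq\m$'', in which case $c(f)$ automatically consists of zero-divisors and property (A) applies. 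You skip Lemma~\ref{lem:Mclocalnzd} entirely and instead prove directly that $c_B(f)$ consists of zero-divisors, by feeding the multiplicative set generated by a single putative non-zerodivisor $a'\in c_{S_\m}(\tilde f)$ into the semicontent property and using finite generation of $c_{S_\m}(\tilde g)$ to conclude $a^N c_B(g)=0$, hence $c_B(g)=0$ and $g=0$. This is a sharper use of the semicontent hypothesis (with a nontrivial $W$) that buys you independence from Lemma~\ref{lem:Mclocalnzd} and from the reduction to a maximal ideal of zero-divisors; the paper's version uses the semicontent property only in its weakest form but leans on the extra localization step. Both arguments ultimately rest on the same two external facts --- that localization preserves the semicontent property (\cite[Corollary 3.4]{nmeSh-OR}, which you should cite explicitly rather than re-derive informally) and that quotients/localizations of fidel (A)-rings retain property (A) --- and both terminate in the same appeal to property (A) applied to the finitely generated ideal $c(f)$.
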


\begin{proof}
By \cite[Proposition 3.6]{nmeSh-OR2}, $R/I \ra S/IS$ is a semicontent algebra; hence it is enough to prove that $R \ra S$ is McCoy.  After the above reduction, let $W$ be the set of regular elements of $R$.  By Lemma~\ref{lem:Mclocalnzd} and \cite[Corollary 3.4]{nmeSh-OR} it is enough to show that $W^{-1}R \ra W^{-1}S$ is McCoy.  Thus, we may assume every zero-divisor of $R$ is a unit.  For our final reduction, by Lemma~\ref{lem:Mcglobal} and \cite[Corollary 3.4]{nmeSh-OR}, it is enough to show that $R_\m \ra S_\m$ is McCoy for any maximal ideal $\m$ of $R$. Thus, we may assume that $(R,\m)$ is local and satisfies property (A), and that $\m$ consists of zero-divisors, and we need to show under these circumstances that a semicontent algebra $R \ra S$ is McCoy.

Accordingly, let $f,g \in S$ with $fg=0$ and $g\neq 0$.  If $c(f) = R$, then by the semicontent algebra property we have $c(g) = c(fg) = c(0) = 0$, but by the Ohm-Rush property, $0\neq g \in c(g)S = 0S = 0$, a contradiction.  Thus, $c(f) \subseteq \m$. But then $c(f)$ is a finitely generated ideal consisting of zero-divisors.  Thus by property (A), there is some nonzero $r\in R$ with $rc(f)=0$, finishing the proof that the algebra is McCoy.
\end{proof}

We obtain the following summation for Noetherian base rings as a corollary.

\begin{thm}\label{thm:Noeth}
Let $R \ra S$ be a faithfully flat Ohm-Rush algebra.  Assume $R$ is Noetherian.  Then the following are equivalent: \begin{enumerate}
    \item\label{it:Nwca} $S$ is a weak content $R$-algebra.
    \item\label{it:Nsca} $S$ is a semicontent $R$-algebra.
    \item\label{it:Nrmc} $R \ra S$ is residually McCoy.
    \item\label{it:Nrmcrad} $R\ra S$ is residually McCoy for radical ideals.
    \item\label{it:Nrmcprime} $R \ra S$ is residually McCoy for prime ideals.
\end{enumerate}
\end{thm}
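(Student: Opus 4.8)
The plan is to assemble the five-way equivalence entirely from machinery already developed, arranging the implications into a single cycle rather than proving any one of them from scratch. The load-bearing observation is that a Noetherian ring is \emph{locally} fidel (A): for every prime $\p \in \Spec R$ the localization $R_\p$ is again Noetherian, and Noetherian rings have the fidel (A) property (both facts are recorded above). This is precisely what is needed to bring Theorem~\ref{thm:McCoySCAfidel} to bear on our situation.

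Concretely, I would proceed as follows. First, apply Theorem~\ref{thm:wcarmc} — which needs only the Ohm-Rush hypothesis — to obtain (\ref{it:Nwca}) $\iff$ (\ref{it:Nrmcrad}) $\iff$ (\ref{it:Nrmcprime}). Next, since $R \ra S$ is faithfully flat and $R$ is Noetherian, \cite[Corollary 4.3]{nmeSh-OR2} yields (\ref{it:Nwca}) $\iff$ (\ref{it:Nsca}). Now use the local fidel (A) remark of the preceding paragraph: a semicontent algebra over such a base satisfies the hypotheses of Theorem~\ref{thm:McCoySCAfidel}, so (\ref{it:Nsca}) $\implies$ (\ref{it:Nrmc}). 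Finally, (\ref{it:Nrmc}) $\implies$ (\ref{it:Nrmcrad}) is immediate, since radical ideals form a subcollection of all ideals. This closes the loop (\ref{it:Nwca}) $\implies$ (\ref{it:Nsca}) $\implies$ (\ref{it:Nrmc}) $\implies$ (\ref{it:Nrmcrad}) $\implies$ (\ref{it:Nwca}), with (\ref{it:Nrmcprime}) already tied to (\ref{it:Nwca}), so all five statements coincide.

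The one step demanding any genuine attention — and it is the closest thing here to an obstacle — is verifying that the Noetherian hypothesis on $R$ really does feed the hypothesis ``$R_\p$ is fidel (A) for all $\p$'' required by Theorem~\ref{thm:McCoySCAfidel}; this amounts to combining the stability of the Noetherian condition under localization with the cited fact (see \cite[Section 4]{AnCh-annmod}) that Noetherian rings are fidel (A). Everything else is bookkeeping: no fresh computation with contents, annihilators, or localizations is needed beyond what Theorem~\ref{thm:wcarmc}, Theorem~\ref{thm:McCoySCAfidel}, \cite[Corollary 4.3]{nmeSh-OR2}, and the trivial specialization of ``all ideals'' to ``radical ideals'' already provide.
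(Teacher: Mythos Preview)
Your proposal is correct and matches the paper's own proof essentially step for step: both invoke Theorem~\ref{thm:wcarmc} for the equivalence of (\ref{it:Nwca}), (\ref{it:Nrmcrad}), (\ref{it:Nrmcprime}), cite \cite[Corollary 4.3]{nmeSh-OR2} for (\ref{it:Nwca}) $\iff$ (\ref{it:Nsca}), use Theorem~\ref{thm:McCoySCAfidel} for (\ref{it:Nsca}) $\implies$ (\ref{it:Nrmc}), and note that (\ref{it:Nrmc}) $\implies$ (\ref{it:Nrmcrad}) is definitional. Your extra remark verifying that Noetherian implies locally fidel (A) is handled in the paper simply by the parenthetical in the statement of Theorem~\ref{thm:McCoySCAfidel}.
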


\begin{proof}
The equivalence of (\ref{it:Nwca}) and (\ref{it:Nsca}) is \cite[Corollary 4.3]{nmeSh-OR2}.  The equivalence of (\ref{it:Nwca}), (\ref{it:Nrmcrad}), and (\ref{it:Nrmcprime}) follows from Theorem~\ref{thm:wcarmc}.  Finally, the implication (\ref{it:Nsca}) $\implies$ (\ref{it:Nrmc}) follows from Theorem~\ref{thm:McCoySCAfidel}, and the implication (\ref{it:Nrmc}) $\implies$ (\ref{it:Nrmcrad}) is by definition.
\end{proof}

As an interesting corollary to this along with previous joint work of the author and Shapiro on Ohm-Rush content, we have the following:

\begin{thm}
Let $K$ be a field and let $L/K$ be a regular field extension (i.e. for any extension field $k'$ of $K$, $k' \otimes_K L$ is an integral domain).  Let $R$ be a Noetherian $K$-algebra and let $S := R \otimes_K L$.  Then for any zero-divisor $h\in S$, there is a nonzero element $r\in R$ with $rh=0$.  In particular this holds when $R=K[x_1, \ldots, x_n] / (g_1, \ldots, g_t)$ is a finitely generated $K$-algebra, in which case $S = L[x_1, \ldots, x_n] / (g_1, \ldots, g_t)$.
\end{thm}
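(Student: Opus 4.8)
The plan is to obtain this as a corollary of Theorem~\ref{thm:Noeth} applied to the $R$-algebra $S = R \otimes_K L$. The first step is to reduce to showing that $R \ra S$ is McCoy: given a zero-divisor $h \in S$, choose $0 \neq g \in S$ with $hg = 0$; then the McCoy property produces a nonzero $r \in R$ with $rc(h) = 0$, and since the Ohm-Rush property gives $h \in c(h)S$, we get $rh \in (rc(h))S = 0$, as wanted. Now $R$ is Noetherian, so by Theorem~\ref{thm:Noeth} it suffices to check that $R \ra S$ is a faithfully flat Ohm-Rush weak content algebra; indeed condition~(\ref{it:Nrmc}) then holds, and specializing the ideal to $0$ gives that $R \ra S$ itself is McCoy.

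The structural properties are routine. Faithful flatness of $R \ra R\otimes_K L$ follows by base change along $K \ra R$ from the faithfully flat extension $K \ra L$. For the Ohm-Rush property, fixing a $K$-basis $\{e_i\}_{i \in \Lambda}$ of $L$ with some $e_{i_0} = 1$ presents $S$ as a free $R$-module with this basis; for $f = \sum_i r_i e_i \in S$ one checks directly that the finitely generated ideal generated by the coefficients $r_i$ is the minimum ideal $I$ with $f \in IS$, so $c(f) = (r_i : i)$. (This is also a special case of the author and Shapiro's work on Ohm-Rush content.)

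The main point is that $R \ra S$ is a weak content algebra, i.e., for each $\p \in \Spec R$ one has either $\p S = S$ or $\p S$ prime. Using the same basis through $1$, the quotient $S/\p S = (R/\p)\otimes_K L$ contains the nonzero ring $R/\p$, so $\p S \neq S$. For primality, I would exploit that $L$ is flat over the field $K$, so $-\otimes_K L$ is exact and the inclusion $R/\p \hookrightarrow \Frac(R/\p) = \kappa(\p)$ remains injective after tensoring: $S/\p S = (R/\p)\otimes_K L \hookrightarrow \kappa(\p)\otimes_K L$. Since $\kappa(\p)$ is an extension field of $K$ (any ring map from a field to a nonzero ring is injective), the regularity hypothesis on $L/K$, applied with $k' = \kappa(\p)$, says $\kappa(\p)\otimes_K L$ is a domain; hence so is its subring $S/\p S$, and $\p S$ is prime. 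I expect this to be the crux of the argument, since it is the only place the regular-extension hypothesis enters --- though once the right embedding is identified it is short.

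Finally, for the displayed special case, applying the right-exact functor $-\otimes_K L$ to the surjection $K[x_1, \ldots, x_n] \onto R = K[x_1,\ldots,x_n]/(g_1,\ldots,g_t)$ gives $S = R\otimes_K L = L[x_1,\ldots,x_n]/(g_1,\ldots,g_t)L[x_1,\ldots,x_n]$; such an $R$ is Noetherian by Hilbert's basis theorem, so the general statement applies.
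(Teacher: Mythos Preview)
Your proof is correct and follows essentially the same route as the paper: verify that $R \ra S$ is a faithfully flat Ohm-Rush weak content algebra and then invoke Theorem~\ref{thm:Noeth}. The only difference is that the paper obtains the weak content property by citing \cite[Lemma~4.9]{nmeSh-OR2} together with freeness of $S$ over $R$, whereas you unpack this step directly via the prime-ideal characterization and the embedding $S/\p S \hookrightarrow \kappa(\p)\otimes_K L$; the substance is the same.
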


\begin{proof}
By \cite[Lemma 4.9]{nmeSh-OR2} and the fact that $S$ is free as an $R$-module, $S$ is a weak content algebra over $R$.  Then the result follows from Theorem~\ref{thm:Noeth}.
\end{proof}

\providecommand{\bysame}{\leavevmode\hbox to3em{\hrulefill}\thinspace}
\providecommand{\MR}{\relax\ifhmode\unskip\space\fi MR }
\providecommand{\MRhref}[2]{%
  \href{http://www.ams.org/mathscinet-getitem?mr=#1}{#2}
}
\providecommand{\href}[2]{#2}

\end{document}